\newcommand{\akh}{\hat{a}_m}
\newcommand{\ak}{a_m}
\newcommand{\qh}{\hat{q}}
\newcommand{\q}{q}
\newcommand{\Flk}{\mathcal{F} ^\ell _k}
\newcommand{\floorn}{\lfloor \frac{n}{a_m}\rfloor}
\newcommand{\floornM}{\lfloor \frac{n}{M}\rfloor}
\theoremstyle{plain}
\newtheorem{theorem}{Theorem}[section]
\newtheorem{corollary}[theorem]{Corollary}
\newtheorem{lemma}[theorem]{Lemma}
\theoremstyle{remark}
\newtheorem{remark}{Remark}[section]
\newtheorem{example}[remark]{Example}
\begin{document}
\title[Compact symmetric solutions]
{Compact symmetric solutions to the postage stamp problem}
\thanks{Both authors were supported in part by the
National Sciences and Engineering Research Council of Canada.}
\author{Hugh Thomas}
\address{Department of Mathematics and Statistics, University of New
Brunswick, Fredericton NB, E3B 5A3 Canada}
\email{hugh@math.unb.ca}
\author{Stephanie van Willigenburg}
\address{Department of Mathematics, University 
of British Columbia, Vancouver BC,  V6T 1Z2 Canada}
\email{steph@math.ubc.ca}

\subjclass[2000]{Primary 11B13, 11B83, 11P99}
\keywords{postage stamp, symmetric set, completely representable}


\begin{abstract}
We derive lower and upper bounds on possible growth rates of
certain 
sets of positive integers $A_k=\{1= a_1 < a_2 < \ldots  < a_{k}\}$ such that all integers $n\in \{0, 1, 2, \ldots , ka_{k}\}$ can be represented as a sum of no more than $k$ elements of $A_k$, with repetition. 
\end{abstract}

\maketitle

\section{Introduction \label{intro}}
The postage stamp problem \cite[C 12]{guy}  is a classic problem in additive number theory and can be described as follows: if $h$ and $k$ are positive integers, $A_k=\{1= a_1 < a_2 < \ldots  < a_{k}\}$, $a_i\in \mathbb{N}$ and
$$S(h, A_k)=\{\sum _{i=1} ^{k} x_ia_i | x_i \geq 0, \sum _{i=1} ^{k} x_i \leq h\}$$
then what is the smallest positive integer $N(h,A_k)\not \in S(h, A_k)$? One focus is to solve the global aspect of this problem, that is, given $h$ and $k$ find $A_k$ such that $N(h, A_k)$ is as large as possible. The case $k=3$ was solved by Hofmeister \cite{hofmeister}, and for $k\geq 4$ R\"odseth \cite{rodseth} derived the currently best known general upper bound.  Another focus is to solve the local aspect, that is, given $h, k$ and $A_k$ determine $N(h, A_k)$. The case $k=3$ is covered in \cite{selmer}.  Both aspects were solved for the case $k=2$ in \cite {stohr}. 

It is easy to see that $N(h,A_k)\leq ha_k+1$.  In this paper, we focus on
integrating the global and local aspects by investigating  certain sets generated  for which this inequality is actually an equality.

\subsection{Preliminaries}
From here on we restrict our attention to the situation $h=k$. We say a set $A_k=\{1= a_1 < a_2 < \ldots  < a_{k}\}$ is \emph{symmetric} if when $k=2m$ then
\begin{eqnarray*}
a_1&=&1\\
a_i &>&a_{i-1} \text{ for } 2\leq i\leq m\\
a_{m+i}&=&2a_m-a_{m-i}\text{ for } 1\leq i \leq m-1\\
a_{2m}&=&2a_m
\end{eqnarray*}
and when $k=2m+1$ then
\begin{eqnarray*}\
a_1&=&1\\
a_i &>&a_{i-1} \text{ for } 2\leq i\leq m\\
\akh &=& a_m +x \text{ for } 0<x\in \mathbb{N} \\
a_{m+i}&=&\akh + a_m-a_{m-i}\text{ for } 1\leq i \leq m-1\\
a_{2m}&=&\akh + a_m,
\end{eqnarray*}where the $2m+1$ elements are ordered $$a_1<a_2<\ldots < a_m<\akh <a_{m+1}<\ldots <a_{2m}.$$This labelling of the elements has been chosen to make the enclosed proofs more uniform.

The largest integer that can be represented as the sum of $k$
positive integers chosen from $A_k$, with repetitions allowed, is clearly
$ka_k$.  If every positive integer $n$, $0\leq n\leq ka_k$, 
can be represented as the
sum of at most $k$ positive integers from $A_k$, then we say that $A_k$ is
\emph{compact}. We now study the growth rate of the $a_i$ such that $A_k$ is both symmetric and compact. More precisely, if $A_k=\{1= a_1 < a_2 < \ldots  < a_{2m}\}$ is symmetric then we derive bounds $\alpha ,\beta$ such that if $\frac{a_i}{a_{i-1}}\leq \alpha$ for $2\leq i\leq m$ then $A_k$ will always be compact, whereas if $\beta \leq \frac{a_i}{a_{i-1}}$ for $2\leq i\leq m$ then $A_k$ will never be compact. Symmetric compact sets were studied in \cite{doig} where the focus was on sets with a stronger symmetry property, known as nested symmetry.

For convenience, we refer to $A_k$ as the \emph{base}, refer to the $a_i$ as \emph{base elements}, and denote the largest base element by $M$.

\subsection{Acknowledgements.}  The authors are grateful to Greg Martin for suggesting the problem  and to GAP \cite{Gap} for generating the pertinent data. 

\section{A lower bound}
We now describe symmetric sets $A_k$ that are  compact.  For the remainder of this section, let $A_k=\{1= a_1 < a_2 < \ldots  < a_{2m}\}$ be a symmetric base such that 
\begin{enumerate}\item the $a_i$ satisfy
\begin{eqnarray*}
a_1&=&1\\
a_i &\leq &3a_{i-1} \text{ for } 2\leq i\leq m\\
a_{m+i}&=&2a_m-a_{m-i}\text{ for } 1\leq i \leq m-1\\
a_{2m}&=&2a_m
\end{eqnarray*}
or
\item the $a_i$ satisfy
\begin{eqnarray*}\
a_1&=&1\\
a_i &\leq& 3a_{i-1} \text{ for } 2\leq i\leq m\\
\akh &=& a_m +x \text{ for } 0< x \le 2a_m\\
a_{m+i}&=&\akh + a_m-a_{m-i}\text{ for } 1\leq i \leq m-1\\
a_{2m}&=&\akh + a_m.
\end{eqnarray*}\end{enumerate}

The following theorem on $A_k$ can be proved via \cite[Korollar]{kirfel}, however, we provide a direct proof, which begins with
\begin{lemma}\label{main_lemma}
Let $1\leq r\leq m-1$. If $\floornM \leq r$ and $n- \floornM M< a_{r+1}$ then $n$ can be written as a sum of at most $2r$ base elements with repetition.
\end{lemma}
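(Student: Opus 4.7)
The plan is to induct on $r$. For the base case $r = 1$, since $a_2 \leq 3 a_1 = 3$ we have $q \in \{0,1\}$ and $s \in \{0,1,2\}$; every sub-case is immediate except $q = 1$, $s = 2$, $a_2 = 3$, which I handle via $M + 2 = (M - a_1) + a_2 = a_{2m-1} + a_2$, using the symmetry identity $a_1 + a_{2m-1} = M$ (valid because $r \leq m-1$ forces $m \geq 2$).

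For the inductive step, fix $2 \leq r \leq m-1$, assume the lemma at $r-1$, and let $n = qM + s$ with $q \leq r$ and $s < a_{r+1}$. I split on the size of $s$. Case (i): $s < a_r$. Apply the inductive hypothesis to $qM + s$ if $q \leq r-1$; if $q = r$, apply it instead to $(r-1)M + s$ (valid since $s < a_r$) and append one $M$. Case (ii): $a_r \leq s < 2a_r$. Write $s = a_r + t$ with $t < a_r$ and apply the hypothesis to $qM + t$ (appending $a_r$) or, if $q = r$, to $(r-1)M + t$ (appending $M + a_r$). Case (iii): $2a_r \leq s < a_{r+1}$, which is non-empty only when $a_{r+1} > 2a_r$. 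Write $s = 2a_r + t$ with $t < a_{r+1} - 2a_r \leq a_r$; for $q \leq r-1$ apply the hypothesis to $qM + t$ and append $2a_r$. Each sub-case so far yields at most $2r$ elements by a direct count.

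The main obstacle is Case (iii) with $q = r$, where the naive approach overshoots by one element. To save it, I introduce $\beta := a_{r+1} - s$, so that $0 < \beta \leq a_{r+1} - 2a_r \leq a_r$ and $n = rM + a_{r+1} - \beta$. The boundary $\beta = a_r$ (which forces $a_{r+1} = 3a_r$ and $t = 0$) gives $n = (r-1)M + a_{2m-r} + a_{r+1}$ directly via $M = a_r + a_{2m-r}$, using $r+1 \leq 2r$ elements. Otherwise $\beta < a_r$, and I apply the inductive hypothesis to $(r-1)M + \beta$ to obtain a representation with at most $2(r-1)$ base elements. Using the symmetry identities $a_{m+j} + a_{m-j} = M$ for $1 \leq j \leq m-1$ together with $2a_m = M$ (in the $k=2m$ case) or $\akh + a_m = M$ (in the $k=2m+1$ case), I rewrite each upper-half element of that representation as $M$ minus its lower-half partner, extracting a signed decomposition $\beta = P - N$ whose indices lie in $\{1,\ldots,m\}$ (augmented by $\akh$ when applicable). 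Because $\beta < M$ forces the original representation to use exactly $r-1$ ``$M$-equivalents'' (the combined total of copies of $M$, $\akh$, and $a_{m+j}$'s), a bookkeeping argument yields the key bounds $|P|, |N| \leq r - 1$.

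To finish, I assemble $n = rM + a_{r+1} - P + N$ by letting each of the $|P|$ terms in $-P$ consume one of the $r$ available copies of $M$ via $M - a_i = a_{2m-i}$ (or the corresponding middle-pair identity). The resulting representation has $r - |P|$ leftover copies of $M$, $|P|$ complementary upper-half elements, one copy of $a_{r+1}$, and the $|N|$ elements of $N$, for a total of $(r - |P|) + |P| + 1 + |N| = r + 1 + |N| \leq r + 1 + (r-1) = 2r$. The delicate part of the argument is precisely this bookkeeping in Case (iii), $q=r$: a greedy representation of $s$ alone can already take up to $2r$ terms, so the $r$ copies of $M$ cannot be inserted without overshoot unless they are traded against ``negative'' lower-half terms obtained by passing to the symmetric complement $\beta$, which is what the rewriting above accomplishes.
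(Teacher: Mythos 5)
Your induction, your base case, and your handling of the cases $s<2a_r$ all match the paper's proof in substance. The genuine divergence is the case $q=r$, $2a_r\le s<a_{r+1}$. The paper disposes of every case with $j=2$ and $i>0$ in one stroke: subtract the two base elements $a_{2m-r}=M-a_r$ and $a_{r+1}$ from $n$; the result has $M$-quotient $i-1\le r-1$ and remainder $n-iM+a_r-a_{r+1}\in[3a_r-a_{r+1},a_r)\subseteq[0,a_r)$, so the inductive hypothesis applies and only two summands were spent. Your sub-case $\beta=a_r$ is exactly this trick; for $\beta<a_r$ you instead run the induction on the complement $(r-1)M+\beta$ and fold the resulting representation back in via $M-a_i=a_{2m-i}$. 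That route can be made to work, but it is much heavier, and as written it contains an error.

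The error is the assertion that, because $\beta<M$, the representation of $(r-1)M+\beta$ supplied by the inductive hypothesis must contain exactly $r-1$ ``$M$-equivalents''; you use this both to count the leftover copies of $M$ as $r-|P|$ and to get $|N|\le r-1$. The inductive hypothesis gives you no control over which representation you receive, and the claim is false in general: with $a_i=3^{i-1}$ for $i\le m=5$, so $M=162$, take $r=4$ and $\beta=1$; then $(r-1)M+\beta=487=162+162+81+81+1$ is a valid representation by $5\le 2(r-1)$ base elements containing only two $M$-equivalents, not three, and your formula $(r-|P|)+|P|+1+|N|$ then miscounts. Fortunately the conclusion survives a corrected count: if the representation has $w$ copies of $M$, $u$ upper-half elements (each rewritten as $M$ minus its partner, so $|N|=u$), and $|P|$ remaining elements, with $w+u+|P|\le 2r-2$, then $n=(2r-1-u-w-|P|)M+\sum_{a_i\in P}a_{2m-i}+a_{r+1}$ plus the elements of $N$, all coefficients nonnegative, for a total of $2r-w\le 2r$ summands. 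So either replace the ``exactly $r-1$'' claim with this direct count, or adopt the paper's two-element subtraction, which handles all of $j=2$, $i>0$ uniformly and avoids the bookkeeping altogether.
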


\begin{proof}
We proceed by induction on $r$. When $r=1$ observe that $n-\floornM M < a_2\leq 3$, so $n=0,1,2,M,M+1,M+2$. That $n$ can be written as a sum of two base elements is trivial for all cases bar $n=M+2$. This case only arises if $a_2=3$,
in which case we can write $n=(M-1)+a_2$.

Now let $i=\floornM , j=\lfloor \frac{n-iM}{a_r}\rfloor$. Since $n-iM <a _{r+1} \leq 3 a_r$ we know $0\leq j \leq 2$. If
\begin{enumerate}
\item $j\leq 1$ let $n'= n- j a_r - \min(i,1)M$
\item $j=2, i=0$ let $n' = n-2a_r$
\item $j=2, i>0$ let $n'= n -(M-a_r)-a_{r+1}$.
\end{enumerate}

Note in each of these cases, respectively, $n'\geq 0$ since
\begin{enumerate}
\item if $j=0$ then $n-iM\geq 0$, whereas if $j=1$ then $n-iM \geq a_r$
\item if $j=2$ and $i=0$, then$\lfloor \frac{n}{a_r} \rfloor = 2$, so $n\geq 2a_r$
\item if $j=2$ and $i>0$, then $\lfloor \frac{n-iM}{a_r} \rfloor =2$, so $n-iM \geq 2a_r$ and $n-iM+a_r-a_{r+1}\geq 3a_r -a_{r+1}\geq 0$.
\end{enumerate}

Moreover, in each of these cases $n'$ respectively satisfies
\begin{enumerate}
\item $i'=\lfloor \frac{n'}{M} \rfloor = 0$ if $i=0$, or 
$i'=\lfloor \frac{n'}{M} \rfloor =i-1\leq r-1$ otherwise, and 
$\lfloor \frac{n'-i'M}{a_r} \rfloor=\lfloor \frac{n-ja_r}{a_r} \rfloor =0$ if $i=0$, or 
$\lfloor \frac{n'-i'M}{a_r} \rfloor =\lfloor \frac{n'-(i-1)M}{a_r} \rfloor=
\lfloor \frac{n-ja_r-iM}{a_r} \rfloor =0$ otherwise, so $n'- \lfloor \frac{n'}{M} \rfloor M < a_r$
\item $i'=\lfloor \frac{n'}{M} \rfloor = 0<r-1$ since $i=0$ and $r\geq 2$, and $\lfloor \frac{n'-i'M}{a_r} \rfloor = \lfloor \frac{n- 2a_r}{a_r} \rfloor =0$ since $j=2$, so  $n'- \lfloor \frac{n'}{M} \rfloor M < a_r$
\item $i'=\lfloor \frac{n'}{M} \rfloor =\lfloor \frac{n+ a_r - a_{r+1} -M}{M} \rfloor =i-1\leq r-1$ since $a_r-a_{r+1}<0$, and $\lfloor \frac{n'-i'M}{a_r} \rfloor =\lfloor \frac{n'-(i-1)M}{a_r} \rfloor = \lfloor \frac{n+ a_r - a_{r+1} -iM}{a_r} \rfloor =0$ since $n-iM< a_{r+1}$, so $n-iM-a_{r+1} +a_r < a_r$, and hence  $n'- \lfloor \frac{n'}{M} \rfloor M < a_r$.
\end{enumerate}

Thus in each case we can apply the induction hypothesis to $n'$ and write $n'$ as a sum of at most $2(r-1)$ base elements with repetition. The result now follows for $n$.
\end{proof}

\begin{theorem}\label{lower}
$A_k$ is compact.
\end{theorem}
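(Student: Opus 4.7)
The plan is to combine the symmetry of $A_k$ with Lemma~\ref{main_lemma}. The first observation is that for every base element $b\ne M$ the complement $M-b$ is itself a base element: when $k=2m$ this is the defining identity $a_{m+i}+a_{m-i}=2a_m=M$, and when $k=2m+1$ the same holds for $1\le i\le m-1$ together with the additional pairing $a_m+\akh=M$. Consequently, if $n=b_1+\cdots+b_t$ with $t\le k$ base elements, then
\[
kM-n \;=\; (k-t)\,M+\sum_{i=1}^{t}(M-b_i),
\]
where each summand on the right is either a base element or zero; discarding the zeros writes $kM-n$ as a sum of at most $k$ base elements. Hence representability of $n$ in $\le k$ summands is equivalent to representability of $kM-n$, and it suffices to treat $n$ with $\floornM\le m$ (which covers $n\in[0,\lfloor kM/2\rfloor]$).

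Fix such an $n$, set $q=\floornM$ and $s=n-qM$, and case-split on $s$. If $q\le m-1$ and $s<a_m$, Lemma~\ref{main_lemma} with $r=m-1$ writes $n$ as a sum of at most $2m-2$ base elements. If $q\le m-1$ and $s\ge a_m$, peel off at most two \emph{middle} base elements so that the resulting number still has $\floornM=q$ but remainder strictly less than $a_m$: peel a single $\akh$ when $\akh\le s<M$ (using $M-\akh=a_m$); peel a single $a_m$ when $a_m\le s<2a_m$; or peel two copies of $a_m$ when $2a_m\le s<\akh$, a range that can arise only for $k=2m+1$ with $x>a_m$ and that then satisfies $s-2a_m<x-a_m\le a_m$. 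In each sub-case, Lemma~\ref{main_lemma} applied to the residual number contributes at most $2m-2$ further summands, for a total of at most $2m\le k$.

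The remaining possibility is $q=m$. For $k=2m$ this forces $s=0$, and $n=mM$ is just $m\le k$ copies of $M$. For $k=2m+1$, the symmetry reduction restricts $s\le\lfloor M/2\rfloor$: one first peels off a single copy of $M$, reducing to $q'=m-1$ on $n-M$, and then carries out the preceding analysis, absorbing one extra element into each earlier count. The tightest sub-case is $x=2a_m$ with $s=2a_m$, where peeling two copies of $a_m$ and one copy of $M$, followed by Lemma~\ref{main_lemma}, uses exactly $2+1+(2m-2)=2m+1=k$ elements. The main obstacle is precisely this bookkeeping: one must verify that in every admissible range of $s$ (governed by the relative sizes of $a_m$, $\akh$, $M$, and $x$) the peeled elements plus the bound from Lemma~\ref{main_lemma} never exceed $k$. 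The small cases $m=1$, i.e.\ $k\in\{2,3\}$, lie outside the range $r\ge 1$ of the lemma and are handled by direct enumeration.
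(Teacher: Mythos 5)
Your argument is correct and follows essentially the same route as the paper: reduce to $n\le\lfloor kM/2\rfloor$ via the complementation $b\mapsto M-b$, peel off at most one copy of $M$ and at most two ``middle'' elements ($a_m$ or $\hat{a}_m$) so that the remainder modulo $M$ drops below $a_m$, and then invoke Lemma~\ref{main_lemma} with $r=m-1$. The only real difference is presentational: in Case (2) the paper rebalances the coefficients $\hat{q},q$ of $\hat{a}_m,a_m$ until they differ by at most $2$, whereas you locate $s=n-\lfloor n/M\rfloor M$ in one of the intervals $[0,a_m)$, $[a_m,2a_m)$, $[2a_m,\hat{a}_m)$, $[\hat{a}_m,M)$ --- an equivalent and arguably tidier bookkeeping --- and the $m=1$ cases you defer to enumeration are indeed routine.
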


\begin{proof} \emph{Case (1):} Suppose first that $n<2ma_m$.  
Consider $\floorn$. If $\floorn$ is even then $\floorn =2l\leq 2(m-1)$ and so $\lfloor \frac{n}{2a_m}\rfloor = \floornM \leq m-1$ and $n-\floornM M=n- \lfloor \frac{n}{2a_m}\rfloor M < a_m$. Thus, by Lemma ~\ref{main_lemma} $n$ can be written as a sum of at most $2(m-1)$ base elements. If $\floorn$ is odd then $\floorn = 2l+1$. Let 
$n' = n-a_m$ then $\lfloor \frac{n'}{a_m} \rfloor $ is even and by the above argument we can write $n'$ as a sum of at most $2(m-1)$ base elements, 
and hence in both situations $n$ can be written 
as a sum of at most $2m-1$ base elements. 

If $n=2ma_m$, it is clear that $n$ can be written as a sum of $m$ base elements,
so now suppose that $ 2ma_m< n \leq 4ma_m$.  
We have already shown that $4ma_m-n$ can
be written as a sum of at most $2m-1$ base elements; by the symmetry of $A_k$
it follows that $n$ can be written as a sum of at most $2m$ base elements.  

\emph{Case (2):} 
Suppose first that $n\leq (m+\frac{1}{2})(\akh +\ak)$.
Write
 $$n=\hat{q}\akh +q \ak +r$$
where $0\leq r< \ak$ and $|\qh -\q |$ is as small as possible. Note that in fact
 we can always find $\qh, \q$ such that $|\qh - \q |\leq 2$ by the following. If $|\qh - \q | >2$ then consider $\lfloor \akh / \ak\rfloor$. If $\lfloor \akh / \ak \rfloor =1$ and
 $\qh >\q $ then we can rewrite $n$ as 
 $$n= (\qh - 1)\akh + (\q +1) \ak  +(r+x)$$
 if $x+r <\ak$, and
 $$n= (\qh - 1)\akh + (\q +2) \ak +(r+x -\ak)$$
 otherwise. Alternatively if $\q >\qh$ then we can rewrite $n$ as
 $$n= (\qh + 1)\akh + (\q -2) \ak +(r-x +\ak)$$
 if $r<x$, or
 $$n= (\qh + 1)\akh + (\q -1) \ak +(r-x)$$
 otherwise. Similarly, if $\lfloor \akh / \ak \rfloor =2$ then we can rewrite $n$ as
 $$n= (\qh \mp 1)\akh + (\q \pm 2) \ak +(r \pm x \mp \ak)$$
 or
 $$n= (\qh \mp 1)\akh + (\q \pm 3) \ak +(r \pm x \mp 2\ak).$$
 Finally, if $\lfloor \akh / \ak \rfloor =3$, so $\hat a_m=3a_m$, then we can rewrite $n$ as
 $$n= (\qh \mp 1)\akh + (\q \pm 3) \ak +r.$$ 
 Iterating this procedure we see that we can eventually arrive at coefficients for $\akh$ and $\ak$ that differ by at most $2$. Thus let 
 $$n=\hat{q}\akh +q \ak +r$$
where $0\leq r<\ak$ and $|\qh -\q |\leq 2$.

Let  
\begin{eqnarray*}
n'&=&\left\{ \begin{array}{ll}
n-|\qh -q|\akh -\min (\q ,1)M&\mbox{ if $\qh >q$}\\
n-|\qh -q|\ak -\min (\qh ,1)M&\mbox{ if $q>\qh$}\\
n -\min (\qh ,1)M&\mbox{ if $\qh = q$.}
\end{array}\right .
\end{eqnarray*}Then $\lfloor \frac{n'}{\akh +\ak}\rfloor = \lfloor \frac{n'}{M}\rfloor \leq m-1$ and $n'- \lfloor \frac{n'}{M}\rfloor M < a_m$.
By Lemma ~\ref{main_lemma} we can write $n'$ as a sum of at most $2(m-1)$ base elements, and hence  $n$ can be written as a sum of at most $2m+1$ base elements. 

If $n> (m+\frac{1}{2})(\akh +\ak)$, we apply the symmetry of $A_k$ as in
Case (1), and we are done.  
\end{proof}

\section{An upper bound}
We now derive upper bounds on the $a_i$ by describing conditions on symmetric 
sets $A_k$ that force $A_k$ not to be compact. 
For the remainder of this section let 
 $A_k=\{1= a_1 < a_2 < \ldots  < a_{2m}\}$ be a symmetric base such that 
\begin{enumerate}\item the $a_i$ satisfy
\begin{eqnarray*}
a_1&=&1\\
a_i &\geq &8a_{i-1} \text{ for } 2\leq i\leq m\\
a_{m+i}&=&2a_m-a_{m-i}\text{ for } 1\leq i \leq m-1\\
a_{2m}&=&2a_m
\end{eqnarray*}
or
\item the $a_i$ satisfy
\begin{eqnarray*}\
a_1&=&1\\
a_i &\geq& 8a_{i-1} \text{ for } 2\leq i\leq m\\
\akh &=& a_m +x \text{ for } x \geq 7a_m\\
a_{m+i}&=&\akh + a_m-a_{m-i}\text{ for } 1\leq i \leq m-1\\
a_{2m}&=&\akh + a_m.
\end{eqnarray*}
\end{enumerate}

\begin{theorem}\label{upper}
$A_k$ is not compact.
\end{theorem}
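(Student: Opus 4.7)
The plan is to prove non-compactness by a counting argument: I will isolate a range of small integers in which only the ``small'' base elements can appear in any representation, then show that there are too few multisets of bounded size drawn from these small elements to cover the whole range.

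In Case (1), consider integers $n$ with $0 \le n < a_m$. Since every base element $a_i$ with $i \ge m$ satisfies $a_i \ge a_m > n$, any representation $n = \sum x_i a_i$ with $\sum x_i \le 2m$ uses only $a_1,\dots,a_{m-1}$. The number of multisets of size at most $2m$ drawn from these $m-1$ elements is, by the hockey-stick identity,
\[
\sum_{s=0}^{2m}\binom{m-2+s}{s}=\binom{3m-1}{m-1},
\]
so at most $\binom{3m-1}{m-1}$ distinct integers in $[0,a_m-1]$ are representable. Iterating $a_i\ge 8a_{i-1}$ gives $a_m\ge 8^{m-1}$, so it suffices to verify $\binom{3m-1}{m-1}<8^{m-1}$ for $m\ge 2$. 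The base case $m=2$ is $5<8$, and the inductive ratio is
\[
\frac{\binom{3m+2}{m}}{\binom{3m-1}{m-1}}=\frac{3(3m+1)(3m+2)}{2(m+1)(2m+1)},
\]
which is bounded above by $8$ for all $m\ge 1$ (equivalent to the polynomial inequality $5m^2+21m+10>0$). This produces an unrepresentable $n\in[0,a_m-1]\subseteq[0,2m\,a_{2m}]$, so $A_k$ is not compact.

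Case (2) runs in parallel. For $0 \le n < \hat{a}_m$ only $a_1,\dots,a_m$ can appear, so the number of distinct representable values is at most $\binom{3m+1}{m}$. The hypothesis $x \ge 7a_m$ yields $\hat{a}_m \ge 8a_m \ge 8^m$, so it suffices to show $\binom{3m+1}{m}<8^m$. The analogous induction starts from $\binom{4}{1}=4<8$ and uses the ratio $3(3m+2)(3m+4)/(2(m+1)(2m+3))$, again less than $8$ (equivalent to $5m^2+26m+24>0$).

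The main obstacle is verifying the two binomial-versus-exponential inequalities, but the comfortable gap between the limiting ratio $27/4$ and the base $8$ makes both inductions easy. One minor caveat: Case (1) with $m=1$ gives the compact set $A_2=\{1,2\}$, so the theorem is to be understood as requiring $m\ge 2$ in the even-length case; every other regime covered by the hypotheses is handled by the counting argument above.
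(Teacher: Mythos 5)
Your proof is correct, and its core is the same pigeonhole/counting argument as the paper's: integers below a threshold can only be represented using the small base elements, and there are too few bounded-size multisets of those to cover the whole range. The differences are in the execution, and two of them are worth recording. First, in Case (2) the paper counts representations of $n<a_m$ using $\{a_1,\dots,a_{m-1}\}$ and bounds $\binom{3m}{m-1}$ via $2^{3m}/(m+3)\le 2^{3m-3}$, which only holds for $m\ge 5$, so the published argument silently leaves $1\le m\le 4$ unhandled; your choice of the larger window $[0,\hat{a}_m)$ with the $m$ elements $a_1,\dots,a_m$, combined with $\hat{a}_m\ge 8a_m\ge 8^m$ and $\binom{3m+1}{m}<8^m$, closes Case (2) for every $m\ge 1$, which is a genuine improvement. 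Second, you verify the binomial-versus-exponential inequalities by an explicit ratio induction rather than the paper's comparison of $\binom{3m-1}{m-1}$ with the average $2^{3m-1}/(m+2)$ over a block of binomial coefficients; both work, and your ratio computations check out ($5m^2+21m+10>0$ and $5m^2+26m+24>0$ are indeed the right discrepancies). Finally, your caveat about $m=1$ in Case (1) is a real point rather than pedantry: $A_2=\{1,2\}$ satisfies the hypotheses vacuously and is compact, so the theorem does require $m\ge 2$ in the even case, an assumption the paper's proof also makes (``provided $m\ge 2$'') without stating it in the hypotheses.
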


\begin{proof}
\emph{Case (1):} Suppose that $A_k$ is compact.  If $n< a_m$, $n$
must be written as a sum of base elements chosen from $\{a_1,\dots,a_{m-1}\}$
using at most $2m$ summands.  There are $3m-1\choose m-1$ such sums.  Thus,
$a_m\leq {3m-1\choose m-1}$.  

Observe that 
$${3m\choose m-1}\leq {3m-1\choose i} \text{ for $m-1\leq i \leq 2m$.}$$
Thus, 
$${3m-1\choose m-1}<\frac{\sum_i {3m-1\choose i}}{m+2}=\frac
{2^{3m-1}}{m+2}\leq 2^{3m-3}$$
provided $m\geq 2$.  Thus, $a_m< 8^{m-1}$, contradicting our choice of 
$A_k$.

\emph{Case (2):} Similarly, suppose $A_k$ is compact.  If 
$n<a_m$, then $n$ must be written as a sum of base elements from
$\{a_1,\dots,a_{m-1}\}$, using at most $2m+1$ summands, so, by the same
argument as before, $a_m\leq {3m \choose m-1}$.  

Again, using the same argument, we find that
$${3m\choose m-1}< \frac{2^{3m}}{m+3}\leq 2^{3m-3},$$
provided $m\geq 5$, so $a_m< 8^{m-1}$, contradicting our choice
of $A_k$.  
\end{proof}

\begin{remark}
Utilising the above ideas and Stirling's formula it is possible to improve $8 \leq\frac{a_i}{a_{i-1}}$ to $\frac{27}{4} \leq\frac{a_i}{a_{i-1}}$ for $2\leq i\leq m$, however, we omit the lengthy calculations here. 
\end{remark}

\end{document}
